\DeclareMathOperator{\ad}{ad}
\DeclareMathOperator{\Aut}{Aut}
\DeclareMathOperator{\Diff}{Diff}
\DeclareMathOperator{\E}{E}
\DeclareMathOperator{\GL}{GL}
\DeclareMathOperator{\I}{Isom}
\DeclareMathOperator{\OO}{O}
\DeclareMathOperator{\Ric}{Ric}
\DeclareMathOperator{\scal}{scal}
\DeclareMathOperator{\SO}{SO}
\DeclareMathOperator{\Sym}{Sym}
\DeclareMathOperator{\tr}{tr}
\newcommand{\so}{\mathfrak{so}}
\theoremstyle{plain}
\newtheorem{theorem}{Theorem}[section]
\theoremstyle{remark}
\newtheorem{remark}[theorem]{Remark}
\numberwithin{equation}{section}
\begin{document}

\title{Isometry groups of three-dimensional Lie groups}

\author{Ana Cosgaya}
\address{Universidad Nacional de Rosario, EFB-FCEIA,
  Departamento de Ma\-te\-má\-ti\-ca. Av. Pellegrini 250, 2000
  Rosario, Argentina.}
\email{\href{mailto:acosgaya@fceia.unr.edu.ar}{acosgaya@fceia.unr.edu.ar}}

\author{Silvio Reggiani}
\address{CONICET and Universidad Nacional de Rosario, ECEN-FCEIA,
  Departamento de Ma\-te\-má\-ti\-ca. Av. Pellegrini 250, 2000
  Rosario, Argentina.}
\email{\href{mailto:reggiani@fceia.unr.edu.ar}{reggiani@fceia.unr.edu.ar}}
\urladdr{\url{http://www.fceia.unr.edu.ar/~reggiani}}

\date{\today}

\thanks{Supported by CONICET. Partially supported by SeCyT-UNR and ANPCyT}

\keywords{Lie group, left invariant metric, isometry group, symmetric
  space, index of symmetry}

\subjclass[2010]{53C30, 53C35}

\maketitle

\begin{abstract}
  We compute the full isometry group of any left invariant metric on a
  simply connected, non-unimodular Lie group of dimension three. As an
  application, we determine the index of symmetry of such metrics and
  prove that the singularities of the moduli space of left-invariant
  metrics, up to isometric automorphism, is contained in the subspace
  of classes of metrics with maximal index of symmetry.
\end{abstract}

\section{Introduction}

Lie groups endowed with a left invariant metric are objects of great
importance among Riemannian homogeneous spaces. If $G$ is a Lie group
and $g$ is a left invariant metric on $G$, then the geometry of
$(G, g)$ is locally determined, in a simple algebraic way, by choosing
an inner product on the Lie algebra $\mathfrak g$ of $G$. An
interesting problem in Riemannian geometry, which is also important
from the point of view of theoretical physics, is the explicit
computation of the full isometry group of a homogeneous space. If the
Lie group $G$ is nice enough, then the structure of the isometry group
$\I(G, g)$ of $g$ can be recovered, at least locally, from the
underlying algebraic structure. For example, if $G$ is a compact Lie
group, it is proved in \cite{ochiai-takahashi-1976} that the connected
component $\I_0(G, g)$ of the isometry group is a subgroup of
$L(G) \cdot R(G) \subset \Diff(G)$, where $L(G)$ (resp.\ $R(G)$) is
the subgroup of left (resp.\ right) translations on $G$. In a
different context, it was proved by Wolf in \cite{wolf-1963} (see also
\cite{Wilson_1982}) that if $G$ is nilpotent, then
$\I(G, g) \simeq L(G) \rtimes (\Aut(\mathfrak g) \cap \OO(\mathfrak g,
g))$ is the semi-direct product of $L(G)$ and the isometric
automorphisms of $\mathfrak g$, which are identified, via the isotropy
representation, with the isotropy group of $\I(G, g)$ of the identity
element $e$. Notice that in the general case, if $K$ is the full
isotropy group of $e$, then $\I(G, g) = L(G) \cdot K$, but the Lie
group structures of $G$ and $K$ do not determine the structure of
$\I(G, g)$. In fact, it is known (see for instance \cite{shin-1997})
that $L(G)$ is a normal subgroup of $\I(G, g)$ if and only if the
connected component of $K$ is contained in $\Aut(\mathfrak g)$. Thus,
in general, the full isometry group is not a semidirect product of
$L(G)$ and $K$, nor is contained in $L(G) \cdot R(G)$.

This paper is devoted to the case where $G$ has dimension $3$. Recall
that the unimodular case was already treated in
\cite{ha-lee-2012}. Such groups are well behaved in the sense of the
above paragraph. There, the authors take advantage of this in order to
obtain the full isotropy subgroup as the isometric automorphisms of
the Lie algebra. The classification of the isometry groups, in the
unimodular case, relies on the previous classification of Ha-Lee
\cite{ha-lee-2009} of the moduli spaces of left-invariant metrics, up
to isometric automorphism, of $3$-dimensional Lie groups. The explicit
knowledge of these moduli spaces is a starting point for approaching
the non-unimodular case, but the methods used in the unimodular case
are of no use. Some works related to this topic can be mentioned. For
instance, in the article by Lauret \cite{lauret-2003} (see also
\cite{kodama-takahara-tamaru-2011}) there is a classification of Lie
groups with a unique left invariant metric, up to isometric
automorphism and scaling. In $3$ dimensions, a solvable non-unimodular
Lie algebra appears in Lauret's classification, the so-called Lie
algebra of the hyperbolic space, where all the left invariant metrics
are of constant negative curvature. In
\cite{pefoukeu-nimpa-2017-locally}, the authors determine the left
invariant metrics on $3$-dimensional Lie groups which are locally
symmeric. This is done by solving a polynomial system on the structure
coefficients of the underlying Lie algebras, associated to the
parallel curvarute condition. Hence, one can easily determine the full
isometry groups from the eigenvalues of the Ricci tensor. We can also
mention the article of Gordon and Wilson \cite{gordon-wilson-1988} on
transitive isometry subgroups of Riemannian solvmanifolds, which are
in, what they call, standard position inside the full isometry
group. Their results are mainly useful in the case where $G$ is
unimodular solvable.

In this article we compute the full isometry group of any left
invariant metric on a non-unimodular $3$-dimensional Lie group. The
main ideas involved can be summarized as follows. In order to compute
the full isotropy Lie algebra, we use a theorem by Singer, which
describes such Lie algebra as the skew-symmetric endomorphisms of the
tangent space preserving all the derivatives of the curvature tensor,
up to a certain order (see Theorem
\ref{sec:singers-theorem-lie}). Notice that this theorem is stated in
the article \cite{singer-1960} with no proof. A proof can be found in
the later paper \cite{nicolodi-tricerri-1990} by Nicolodi and
Tricerri. In the case where the isotropy group has positive dimension,
we determine the group structure of the full isometry group thinking
of the Killing fields of $G$ as parallel sections of the associated
vector bundle $TG \oplus \so(TG)$, as used in
\cite{console-olmos-2008}. We want to remark two interesting families
of examples that follow from our classification. In the first place,
we explicitly find a curve of simply connected Lie groups with a
left-invariant metric which are isometric but not isomorphic (see
Remark~\ref{sec:case-g_c-1-1}). Secondly, we find a curve of left
invariant metrics on the same Lie group whose isometry groups are not
isomorphic to each other (see Remark~\ref{sec:case-g_0-1}). This
example is somehow surprising since such behaviour occurs only for the
Lie algebra given by the structure coefficients
\begin{align*}
  [e_0, e_1] = 0, && [e_0, e_2] = -e_1, && [e_1, e_2] = -2e_1. 
\end{align*}
In the generic case, the full isometry group consists only of left
translations, or it is the isometry group of a non-compact
symmetric space.

As an application of our results, we compute the index of symmetry of
all the metrics under study. Recall that the index of symmetry is a
geometric invariant, introduced in \cite{olmos-reggiani-tamaru-2014},
which measures how far is a homogeneous metric from being
symmetric. The index of symmetry was successfully computed for several
distinguished families of homogeneous spaces, such as compact naturally
reductive spaces \cite{olmos-reggiani-tamaru-2014} and naturally
reductive nilpotent Lie groups
\cite{reggiani18_distr_symmet_natur_reduc_nilpot_lie_group}, flag
manifolds \cite{podesta-2015} and $3$-dimensional unimodular Lie
groups \cite{Reggiani_2018}. It follows from our results that every
$3$-dimensional Lie group admits a left invariant metric with
non-trivial index of symmetry. We have noticed that in a recent
preprint \cite{may-2021-index}, R. May also compute, independently and
with other techniques, the index of symmtry for these metrics.

Finally, we obtain a result relating the topology of the moduli space
of left invariant metrics with the index of symmetry. More precisely,
we prove that the singularities of such moduli space are contained in
the subsets of (equivalence classes of) metrics with maximal index of
symmetry (cfr.\ Theorem \ref{sec:geom-mean-subs}). We believe these
topological considerations could be extended to other families of
homogeneous spaces in the future.

\section{Preliminaries}
\label{sec:preliminaries}

\subsection{$3$-dimensional, non-unimodular, metric Lie algebras}

The classification of $3$-di\-men\-sion\-al real Lie algebras, up to
isomorphism, has been known for many years (see for instance
\cite{milnor-1976}). There are six isomorphism classes
of unimodular Lie algebras and uncountably many isomorphism classes of
non-unimodular Lie algebras, which can be parametrized in the
following way. Let $\mathfrak g$ be a non-unimodular real Lie algebra
of dimension $3$. Then there exists a basis $e_0, e_1, e_2$ of
$\mathfrak g$ such that
\begin{align}
  \label{eq:2}
  [e_0, e_1]  = 0, && [e_2, e_0] & = \alpha e_0 + \beta e_1,
  && [e_2, e_1] = \gamma e_0 + \delta e_1,
\end{align}
where the matrix $A =
\begin{pmatrix}
  \alpha & \beta \\
  \gamma & \delta
\end{pmatrix}
$ has $\tr A = 2$. Recall that $A$ is the transpose to the matrix of
$\ad_{e_2}$ (when restricted to the subalgebra spanned by $e_0$ and
$e_1$). Moreover, except for the Lie algebra where $A$ is the identity
matrix, the number $c = \det A$ is a complete isomorphism
invariant. In the rest of the article we will denote by
\begin{itemize}
\item $\mathfrak g_I$, the Lie algebra with $A = I$ and
\item $\mathfrak g_c$, the Lie algebra with $A = 
  \begin{pmatrix}
    0 & 1 \\
    -c & 2
  \end{pmatrix}
  $.
\end{itemize}

In the article \cite{ha-lee-2009}, Ha-Lee classifies all the inner
products on $3$-dimensional Lie algebras up to isometric automorphism
and hence, they determine the moduli space of left invariant metrics
for $3$-dimensional Lie groups. We summarize their results, for the
cases of our interest, in Table \ref{tab:left-invariant-metrics},
where such inner products are presented by means of the corresponding
matrix in the basis $e_0, e_1, e_2$. Notice that the expression of the
inner products when $0 < c < 1$ is rather complicated as it involves
the action of the matrix
\begin{equation}
  \label{eq:1}
    P =
    \begin{pmatrix}
      -\frac{1 + \sqrt{1 - c}}{2 c \sqrt{1 - c}} & -\frac{1}{2 \sqrt{1
          - c}} & 0 \\[.4pc]
      \frac{1 - \sqrt{1 - c}}{2 c \sqrt{1 - c}} & \frac{1}{2 \sqrt{1 -
          c}} & 0 \\[.4pc] 
      0 & 0 & 1
    \end{pmatrix} \in \GL_3(\mathbb R)
  \end{equation}
on a non-diagonal, symmetric, positive definite matrix.

We shall denote by $G_I$ and $G_c$ the simply connected Lie groups
with Lie algebra $\mathfrak g_I$ and $\mathfrak g_c$,
respectively. Since these Lie algebra are solvable, $G_I$ and $G_c$
are diffeomorphic to $\mathbb R^3$. The corresponding Lie group
structures can be described in the following way (as presented in
\cite{ha-lee-2009}). In all cases the Lie group will be a semi-direct
product $\mathbb R^2 \rtimes_\varphi \mathbb R$, where the
representation $\varphi: \mathbb R \to \GL_2(\mathbb R)$ is given by
\begin{equation}\label{eq:16}
  \varphi(t) =
  \begin{cases}
    \left(
      \begin{smallmatrix}
        e^t & 0 \\
        0 & e^t
      \end{smallmatrix}
    \right)
    & \text{ for } G_I \\
    e^t\left(
      \begin{smallmatrix}
        \cosh(t\sqrt{1 - c})
      \end{smallmatrix}
      \left(
        \begin{smallmatrix}
          1 & 0\\
          0 & 1
        \end{smallmatrix}
      \right) + \frac{\sinh(t\sqrt{1 - c})}{\sqrt{1 - c}}
      \left(
        \begin{smallmatrix}
          -1 & - c \\
          1 & 1 \\
        \end{smallmatrix}
      \right)
    \right)
    & \text{ for } G_c
  \end{cases}
\end{equation}
and hence the product on $G_c$ is given by
\begin{equation}
  \label{eq:4}
  (v, t)(w, s) = (v + \varphi(t)w, t + s).
\end{equation}

Note that the formula in (\ref{eq:16}) makes sense even if $c > 1$.

The left invariant metrics on $G_\bullet$, where
$\bullet \in \{I, c\}$, associated with the inner products on
$\mathfrak g_\bullet$ listed in Table \ref{tab:left-invariant-metrics}
will be denoted by the same symbol
$g \in \{g_\nu, g_{\mu, \nu}, g'_{\lambda, \nu}\}$. This presents
certain ambiguity as $g_\nu$ and $g_{\mu, \nu}$ have different meaning
for different Lie algebras, but such ambiguity disappears when one
specifies the value of $\bullet$.

\begin{table}[ht]
  \caption{Left invariant metrics up to isometric
    automorphism on $3$-dimensional, non-unimodular Lie groups.}
  \centering
  {\tabulinesep=1.2mm
    \begin{tabu}{|c|c|c|c|}
      \hline
      Lie algebra & \multicolumn{2}{c|}{Left invariant metrics} & Parameters
      \\ \hline \hline
      $\mathfrak g_I$ & \multicolumn{2}{c|}{$g_\nu = \left(
          \begin{smallmatrix}
            1 & 0 & 0 \\
            0 & 1 & 0 \\
            0 & 0 & \nu
          \end{smallmatrix}
        \right)$} & $0<\nu$ \\ \hline
      \multirow{7}{*}{$\mathfrak g_c$} & $c<0$ & $g_{\mu,\nu}=\left(
        \begin{smallmatrix}
          1&0&0\\
          0&\mu&0\\
          0&0&\nu
        \end{smallmatrix}
      \right)$ & $0<\mu\le|c|$ and $0<\nu$ \\ \cline{2-4} 
      & \multirow{2}{*}{$c=0$} & $g_{\mu,\nu}=\left(
        \begin{smallmatrix}
          1&0&0\\
          0&\mu&0\\
          0&0&\nu
        \end{smallmatrix}
      \right)$ & $0<\mu,\nu$ \\ \cline{3-4} 
      & & $g_\nu=\left(
        \begin{smallmatrix}
          1&\frac12&0\\
          \frac12&1&0\\
          0&0&\nu
        \end{smallmatrix}
      \right)$ & $0<\nu$ \\ \cline{2-4} 
      & $0<c<1$ & $g_{\mu,\nu}=P^T\left(
        \begin{smallmatrix}
          1&\mu&0\\
          \mu&1&0\\
          0&0&\nu
        \end{smallmatrix}
      \right)P$ & {$P$ as in (\ref{eq:1}), $0\le\mu<1$ and $0<\nu$} \\
      \cline{2-4} & \multirow{2}{*}{$c=1$} & $g_{\mu,\nu}=\left(
        \begin{smallmatrix}
          1&0&0\\
          0&\mu&0\\
          0&0&\nu
        \end{smallmatrix}
      \right)$ & $0<\mu\le1$ and $0<\nu$ \\ \cline{3-4}
      & & $g'_{\lambda,\nu}=\left(
        \begin{smallmatrix}
          1&\lambda&0\\
          \lambda&1&0\\
          0&0&\nu
        \end{smallmatrix}
      \right)$ & $0<\lambda<1$ and $0<\nu$ \\ \cline{2-4} 
      & $1<c$ & $g_{\mu,\nu}=\left(
        \begin{smallmatrix}
          1&1&0\\
          1&\mu&0\\
          0&0&\nu
        \end{smallmatrix}
      \right)$ & $1<\mu\le c$ and $0<\nu$ \\ \hline
\end{tabu}
}
\label{tab:left-invariant-metrics}
\end{table}

\subsection{A theorem by Singer on the Lie algebra of the isometry group}

Let $\mathbb V$ be an Euclidean space and let $T$ be an algebraic
tensor of type $(s, t)$ on $\mathbb V$. The natural action of the
orthogonal Lie algebra $\so(\mathbb V)$ on $T$ is defined by
\begin{align*}
  (A \cdot T)(\omega_1, \ldots, \omega_s, v_1, \ldots, v_t)
  & = - \sum_{i = 1}^s T(\omega_1, \cdots, \omega_{i-1}, \omega_i
    \circ A, \omega_{i + 1}, \ldots, \omega_s, v_1, \ldots, v_t) \\
  & \qquad + \sum_{j = 1}^t T(\omega_1, \ldots, \omega_s, v_1, \ldots,
    v_{j - 1}, Av_j, v_{j + 1}, \ldots, v_t),
\end{align*}
where $A \in \so(\mathbb V)$, $\omega_1 \ldots, \omega_s \in \mathbb
V^*$ and $v_1, \ldots v_t \in \mathbb V$. In the work
\cite{singer-1960}, Singer discusses under what conditions a family of
algebraic tensors $R^s$ ($s \ge 0$) of type $(1, s + 3)$ on $\mathbb V = T_pM$,
the tangent space of a differentiable manifold $M$, coincides with the
covariant derivatives of order $s$ at $p$ of the curvature tensor of some
homogeneous Riemannian metric on $M$. In particular, if $s$ ranges
between $0$ and certain integer $k$, one can theoretically recover the
Lie algebra of the isometry group of $M$ from the action of
$\so(T_pM)$ on the algebraic tensors $R_p, (\nabla R)_p, \ldots,
(\nabla^{k + 1} R)_p$. We find it useful to rephrase such theorem in
order to give a procedure to find the Lie algebra of the isotropy
subgroup of the full isometry group of $M$.

\begin{theorem}
  [{Singer \cite{singer-1960}}] \label{sec:singers-theorem-lie} Let
  $M$ be a homogeneous Riemannian space of dimension $n$. Let
  $p \in M$ and $A \in \so(T_pM)$. Then, there exists a Killing vector
  field $X$ on $M$ such that $X_p = 0$ and $(\nabla X)_p = A$ if and
  only if $A \cdot (\nabla^s R)_p = 0$ for all
  $0 \le s < \frac{n (n - 1)}{2}$, where $R$ denotes the Riemannian
  curvature tensor of $M$.
\end{theorem}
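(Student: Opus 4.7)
\medskip

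\noindent\textbf{Proof plan.} The forward direction is immediate. If $X$ is a Killing field with $X_p = 0$, its flow $\varphi_t$ is a one-parameter group of isometries fixing $p$, so each $\varphi_t$ preserves the tensor field $\nabla^s R$ and hence $\varphi_t^*(\nabla^s R)_p = (\nabla^s R)_p$. Differentiating at $t = 0$ and using $(d\varphi_t)_p = \exp(t(\nabla X)_p)$ yields $A \cdot (\nabla^s R)_p = 0$ for every $s \ge 0$, where $A := (\nabla X)_p$.

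For the converse, my plan is to introduce the descending chain of subspaces
\begin{equation*}
  \mathfrak h_s := \{A \in \so(T_pM) : A \cdot (\nabla^k R)_p = 0 \text{ for every } 0 \le k \le s\}, \qquad s \ge -1,
\end{equation*}
with the convention $\mathfrak h_{-1} = \so(T_pM)$, and to set $\mathfrak h_\infty := \bigcap_{s \ge 0} \mathfrak h_s$. By the forward direction, the isotropy subalgebra of Killing fields vanishing at $p$, identified inside $\so(T_pM)$ via $X \mapsto (\nabla X)_p$, sits inside $\mathfrak h_\infty$. The theorem then reduces to two statements: (a)~the reverse inclusion holds, so $\mathfrak h_\infty$ is exactly the image in $\so(T_pM)$ of the isotropy subalgebra at $p$; and (b)~the chain stabilizes early, at some $k_0 \le \tfrac{n(n-1)}{2} - 1$, i.e.\ $\mathfrak h_{k_0} = \mathfrak h_\infty$.

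For (a) I would use a Nomizu-type construction: extend $A \in \mathfrak h_\infty$ to a vector field on a normal neighbourhood of $p$ by parallel transporting $A$ along radial geodesics and applying it to the corresponding Jacobi-field data, checking that the conditions $A \cdot (\nabla^s R)_p = 0$ for all $s$ are precisely the integrability conditions that make the resulting field Killing; a global extension then follows from the homogeneity of $M$ and the uniqueness of Killing extensions from a point.

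The main obstacle is (b), which combines a simple dimensional bound with a delicate stabilization lemma. The dimensional input is $\dim \so(T_pM) = \tfrac{n(n-1)}{2}$, so the chain $\mathfrak h_{-1} \supseteq \mathfrak h_0 \supseteq \mathfrak h_1 \supseteq \cdots$ admits at most $\tfrac{n(n-1)}{2}$ strict inclusions. The key lemma, whose verification is the hard part, is that once two consecutive terms agree the chain remains constant thereafter: $\mathfrak h_s = \mathfrak h_{s-1}$ forces $\mathfrak h_{s+1} = \mathfrak h_s$. Here homogeneity enters crucially: for any $v \in T_pM$, choose a Killing field $Y$ with $Y_p = v$; the identity $\mathcal L_Y(\nabla^s R) = 0$, evaluated at $p$, expresses $(\nabla^{s+1} R)_p$ in terms of $(\nabla^s R)_p$ and the skew-symmetric endomorphism $(\nabla Y)_p$. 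For $A \in \mathfrak h_s$, the assumption $\mathfrak h_s = \mathfrak h_{s-1}$ permits a commutator computation in $\so(T_pM)$ that promotes the vanishing of $A \cdot (\nabla^s R)_p$ to the vanishing of $A \cdot (\nabla^{s+1} R)_p$ via this identity, which is the step I expect to require the most care. Combining the stabilization lemma with the dimensional bound gives $k_0 \le \tfrac{n(n-1)}{2} - 1$, which is exactly the range $0 \le s < \tfrac{n(n-1)}{2}$ in the statement.
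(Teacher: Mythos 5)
This is a statement the paper quotes without proof: the authors explicitly note that Singer states it without proof and refer to Nicolodi--Tricerri for one, so there is no in-paper argument to compare yours against. Judged against the standard proof in the literature, your outline has the right architecture: the forward direction is as routine as you say, and your part~(b) is correct as sketched. In particular the stabilization lemma works exactly by the mechanism you describe: writing $\Lambda_v := (\nabla Y)_p \in \so(T_pM)$ for a Killing field $Y$ with $Y_p = v$, the identity $\mathcal L_Y(\nabla^k R) = 0$ gives $(\nabla^{k+1}R)_p(\,\cdot\,;v) = \Lambda_v \cdot (\nabla^k R)_p$, and for $A \in \mathfrak h_s$ one computes
\begin{equation*}
  \bigl(A \cdot (\nabla^{k+1}R)_p\bigr)(\,\cdot\,;v) = \bigl(([A,\Lambda_v] - \Lambda_{Av}) \cdot (\nabla^k R)_p\bigr) + \Lambda_v \cdot \bigl(A \cdot (\nabla^k R)_p\bigr),
\end{equation*}
so that $B := [A,\Lambda_v] - \Lambda_{Av}$ lies in $\mathfrak h_{s-1}$, which under the hypothesis $\mathfrak h_{s-1} = \mathfrak h_s$ forces $A \cdot (\nabla^{s+1}R)_p = 0$. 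Combined with $\dim\so(T_pM) = \tfrac{n(n-1)}{2}$ this yields exactly the stated range of $s$.

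The genuine weak point is your step~(a). As phrased, parallel transport along radial geodesics together with the Jacobi-field prescription $J(0)=0$, $J'(0)=A\gamma'(0)$ only produces the \emph{candidate} vector field on a normal neighbourhood; the entire content of the theorem is the verification that this candidate is Killing, and "checking the integrability conditions" hides the hard part. The two standard ways to close this are: (i) Nomizu's infinitesimal-model construction, i.e.\ endow $T_pM \oplus \mathfrak h_\infty$ with the bracket built from $R_p$, $(\nabla R)_p$ and the transvection data $\Lambda_v$, where the conditions $A \cdot (\nabla^s R)_p = 0$ together with the second Bianchi identity are precisely what make the Jacobi identity hold, and then integrate the resulting transitive Lie algebra; or (ii) the route of Console--Olmos (cited in this paper), realizing Killing fields as parallel sections of $TG \oplus \so(TG)$ for a suitable connection and showing that the subbundle determined by $\mathfrak h_\infty$ is flat. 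Either way this step uses homogeneity to propagate the pointwise conditions off $p$, which your sketch does not address. Finally, note that the conclusion "there exists a Killing field on $M$" is a \emph{local} statement unless $M$ is simply connected (a flat torus satisfies the hypotheses for every $A$ but carries no global Killing field vanishing at a point with nonzero derivative); this is a defect of the statement as quoted rather than of your argument, and is harmless in the paper since all the groups $G_\bullet$ considered there are diffeomorphic to $\mathbb R^3$.
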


One can improve the statement of the above theorem. In fact, it is
enough to assume that $s$ takes values between $0$ and the so-called
Singer invariant of $M$ (\cite{singer-1960}, see also
\cite{nicolodi-tricerri-1990,console-olmos-2008}).

\subsection{The index of symmetry of a homogeneous
  manifold}\label{sec:index-symm-homog}

Let $(M, g)$ be a Riemannian homogenous space and let $\I(M, g)$ be
its full isometry Lie group. The Lie algebra of $\I(M, g)$ is
canonically identified with the Lie algebra of Killing vector fields
$\mathcal K(M, g)$. The \emph{distribution of symmetry}
$\mathfrak s_g$ of $(M, g)$ is defined at the point $p \in M$ by
\begin{equation*}
  \mathfrak s_g = \{v \in T_pM : v = X_p \text{ for some } X \in
  \mathcal K(M, g) \text{ with }(\nabla X)_p = 0\}.
\end{equation*}

One has that $\mathfrak s_g$ is an autoparallel distribution of $M$,
that is, $\mathfrak s_g$ is integrable with totally geodesics
leaves. Moreover, the distribution of symmetry is invariant under
$\I(M, g)$ and its integrable manifolds are extrinsically isometric to
a globally symmetric space, which is called the \emph{leaf of
  symmetry} of $(M, g)$ (see for instance
\cite{olmos-reggiani-tamaru-2014,berndt-olmos-reggiani-2017}). The
rank of $\mathfrak s_g$ is usually called the \emph{index of symmetry}
of $(M, g)$ and it is denoted by $i_{\mathfrak s}(M, g)$. The index of
symmetry is a geometric invariant which measures how far is $(M, g)$
from being a symmetric space, in the sense that $i_{\mathfrak s}(M, g)
= \dim M$ if and only if $(M, g)$ is a symmetric space. It is known
that the distribution of symmetry can never have corank equal to $1$
(see \cite{Reggiani_2018}). In particular, if $M$ has dimension $3$
and $(M, g)$ is not a symmetric space, then the index of symmetry is
equal to $0$ or $1$.

\section{The full isometry groups}
\label{sec:full-isometry-groups}

In this section we compute the full isometry groups of
$(G_\bullet, g)$ where $\bullet \in \{I\} \cup \mathbb R$ and $g$ is a
left invariant metric on $G_\bullet$. Recall that we will be using the
notations given in Section~\ref{sec:preliminaries}. We analyze several
cases according to the structure of the Lie algebra of $G_\bullet$ and
the moduli space of left invariant metrics. Before getting into the
case-by-case analysis, we outline the general idea for dealing with
the generic case. Assume that the left invariant metric $g$ on
$G_\bullet$ is not symmetric, and let $G = \I(G_\bullet, g)$. Notice
that, $\dim G \le 4$. In fact, the isotropy group of $G$ at any point
is isomorphic, via the isotropy representation, to a compact subgroup
of $\OO(3)$. Let $e \in G_\bullet$ be the identity element. If
$\varphi \in G$ and $\varphi(e) = x$, then
$\varphi = L_x\circ (L_{x^{-1}} \circ \varphi)$, where $L_x \in G$ is
the left translation by $x$. So, the full isometry group decomposes as
\begin{equation}\label{eq:14}
  G \simeq G_\bullet \cdot G_e,
\end{equation}
where $G_e$ is the isotropy subgroup of $G$ at $e$ and we are
identifying $G_\bullet$ with $L(G_\bullet)$. Notice that, in general,
(\ref{eq:14}) is not a semi-direct product. In fact, according to
\cite[Lemma~1.1]{shin-1997}, $G_\bullet$ is a normal subgroup of the
connected component of $G$ if and only of the connected component of
$G_e$ is contained in $\Aut(\mathfrak g_\bullet)$, under the usual
identifications. However, one can recover the algebraic structure of
$G$ from the group structure of $G_\bullet$ and $G_e$ and the geometry
of the metric $g$. In fact, identify the Lie algebra of $G$ with the
Lie algebra of Killing fields $\mathcal K(G_\bullet, g)$. We have from
(\ref{eq:14}) that every element of $\mathcal K(G_\bullet, g)$ can be
written as $X + Y$ where $X$ is a right invariant vector field on
$G_\bullet$ and $Y$ is a Killing field such that $Y_e = 0$. On the
other hand, if $X, X'$ are two Killing vector fields with initial
conditions $X_e = v$, $X'_e = v'$ and $(\nabla X)_e = B$,
$(\nabla X')_e = B'$, then the initial comditions of the Lie bracket
$[X, X']$ are
\begin{align}\label{eq:15}
  [X, X']_e = B'v - Bv', && (\nabla [X, X'])_e = R_{v, v'} - [B, B'].
\end{align}
In fact, the first condition follows from the fact that the
Levi-Civita connection is torsion free, and the second one can be
derived from the so called Killing affine equation (see
\cite{console-olmos-2008} or \cite{Reggiani_2018} for more details).
It follows that in order to compute the full isometry group one only
needs to compute $G_e$. We identify $G_e$, via the isotropy
representation, with a subgroup of
$\OO(\mathfrak g_\bullet, g) \simeq \OO(3)$. At the Lie algebra level,
one can recover the connected component of $G_e$ from its Lie algebra
$\mathfrak g_e \subset \so(\mathfrak g_\bullet, g) \simeq \so(3)$. In
order to simplify some long calculations, it is useful to note
that an element of $G_e$ must preserve the Ricci tensor. In most
cases, the Ricci tensor of $G_\bullet$ is semi-definite and therefore
if an element $A \in \so(\mathfrak g_\bullet, g)$ is induced by a
one-parameter subgroup of $G_e$, then $A$ belongs to a Lie subalgebra
$\mathfrak h \subset \mathfrak{gl}(\mathfrak g_\bullet)$ which is
isomorphic to $\so(2,1)$. This reduces the complexity of the problem,
as we now only need to look for
\begin{equation*}
  A \in \so(\mathfrak g_\bullet, g) \cap \mathfrak h,
\end{equation*}
which in the generic case, has dimension at most $1$. In the case
that this dimension is positive, we then can check if $A$ is in fact
induced by isometries by using Theorem \ref{sec:singers-theorem-lie}.

We now proceed with the study of all the possible cases. We deal with
the isolated cases first.

\subsection{The case of $G_I$}

From (\ref{eq:2}), the bracket relations for $\mathfrak g_I$ are
\begin{align*}
  [e_0, e_1] = 0, && [e_2, e_0] = e_0, && [e_2, e_1] = e_1,
\end{align*}
and so, $\mathfrak g_I$ is isomorphic to the so called \emph{Lie
  algebra of the hyperbolic space} $H^3$. That is, the Lie algebra of
the solvable, transitive group of isometries of $H^3$. Notice that
this Lie group admits a left-invariant metric which makes it isometric
to $H^3$. Moreover, according to \cite{lauret-2003} (see also
\cite{kodama-takahara-tamaru-2011}) this is the only possible left
invariant metric for $G_I$ up to automorphism and scaling. This proves
the following result.

\begin{theorem}
  \label{sec:case-g_i}
  Let $g$ be a left invariant metric on $G_I$, then
  \begin{equation*}
    \I(G_I, g) \simeq \SO(3, 1).
\end{equation*}
\end{theorem}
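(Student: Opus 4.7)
The plan is to leverage the rigidity of the Lie group $G_I$: up to automorphism and rescaling, it admits essentially one left invariant metric, and that metric realises hyperbolic $3$-space.

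First I would exhibit a distinguished left invariant metric $g_0$ on $G_I$ making $(G_I,g_0)$ isometric to real hyperbolic space $H^3$. This is the classical fact that the solvable factor in the Iwasawa decomposition of $\SO(3,1)$ acts simply transitively by isometries on $H^3$, and that this Iwasawa solvable group has Lie algebra isomorphic to $\mathfrak g_I$ (the brackets $[e_2,e_0]=e_0$, $[e_2,e_1]=e_1$, $[e_0,e_1]=0$ are precisely those of $\mathbb{R}^2\rtimes\mathbb{R}$ with $e_2$ acting as the identity, which is the AN part of the Iwasawa decomposition). Pulling back the hyperbolic metric through the orbit map through $e$ produces such a $g_0$, and for it one has $\I(G_I,g_0)\simeq\I(H^3)\simeq\SO(3,1)$.

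Next I would use the cited result of Lauret (and Kodama--Takahara--Tamaru) stating that on $G_I$ every left invariant metric is equivalent to $g_0$ modulo an automorphism of $\mathfrak g_I$ and a positive homothety. Concretely, given an arbitrary left invariant $g$, pick $\Phi\in\Aut(\mathfrak g_I)$ and $\lambda>0$ with $\lambda\,\Phi^{*}g_0=g$ at the identity. The automorphism $\Phi$ integrates to a Lie group automorphism $\tilde\Phi$ of $G_I$, and $\tilde\Phi$ sends the left invariant metric $g_0$ to the left invariant metric $\Phi^{*}g_0$; hence $\tilde\Phi:(G_I,\Phi^{*}g_0)\to(G_I,g_0)$ is an isometry. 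Rescaling by $\lambda$ is a conformal change that preserves the isometry group. Composing, $(G_I,g)$ is isometric to $(G_I,g_0)$.

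From this isomorphism of Riemannian manifolds the isometry groups are isomorphic as Lie groups, so $\I(G_I,g)\simeq\I(G_I,g_0)\simeq\SO(3,1)$. The only conceptual point worth double-checking is the identification of $\mathfrak g_I$ with the Iwasawa $\mathfrak{an}$ of $\mathfrak{so}(3,1)$ in step one; once that is in place the rest is a direct application of Lauret's uniqueness statement, and no further computation is needed. There is no serious obstacle here — the result is really a corollary of the cited classification.
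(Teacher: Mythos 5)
Your proof is correct and follows essentially the same route as the paper: identify $\mathfrak g_I$ with the Iwasawa solvable factor of $\SO(3,1)$ acting simply transitively on $H^3$, and invoke Lauret's uniqueness of the left invariant metric on $G_I$ up to automorphism and scaling. The paper states this more tersely, but the content (including the observation that automorphisms and homotheties preserve the isometry group up to isomorphism) is the same.
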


\begin{remark}
  It follows from routine calculations that if we take $g = g_\nu$ in
  the above theorem, then $(G_I, g_\nu)$ is isometric to the real
  hyperbolic space of curvature $-\frac 1 \nu$ in three dimensions. 
\end{remark}

\subsection{The case of $G_0$}

According to (\ref{eq:4}), the Lie group structure of $G_0 \simeq
\mathbb R^2 \rtimes \mathbb R$ is given by
\begin{equation*}
  (x_0, x_1, x_2)(y_0, y_1, y_2) = \left(x_0 + y_0, x_1 + \tfrac{e^{2 \, x_{2}}}{2} \,
    {\left(y_0 + 2  y_1\right)}  -
    \tfrac{y_0}{2}  , x_2 + y_2\right) 
\end{equation*}
and the canonical frame of left invariant vector fields is given by
\begin{align*}
  e_{0} = \frac{\partial}{\partial x_{0} } + \frac{e^{x_{2}} - 1}{2} 
  \frac{\partial}{\partial x_{1} },
  &&
     e_{1} = e^{2 \, x_{2}} \frac{\partial}{\partial x_{1} },
  &&
     e_{2} = \frac{\partial}{\partial x_{2} }.
\end{align*}

We also have the explicit expression for the corresponding right
invariant vector fields:
\begin{align*}
  r_0 = \frac{\partial}{\partial x_0},
  &&
     r_1 = \frac{\partial}{\partial x_1},
  &&
     r_{2} = (x_0 + 2 x_1) \frac{\partial}{\partial x_1} +
     \frac{\partial}{\partial x_2}.
\end{align*}

\begin{theorem}
  \label{sec:case-g_0}
  Keeping the notation of Table \ref{tab:left-invariant-metrics}, we
  have that
  \begin{enumerate}
  \item\label{item:1} $\I(G_0, g_{\mu, \nu}) \simeq G_0 \cdot \SO(2)$
    where its Lie algebra has a basis $r_0, r_1, r_2, A$ such that
    \begin{align*}
      [r_0, r_1] &= 0, & [r_0, r_2] &= r_1, & [r_1, r_2] & = 2 r_1, \\
      [r_0, A] &= r_2, & [r_1, A] &= 2 r_2, & [r_2, A] & = -\nu r_0 - \frac{2 \nu}{\mu} r_1 + 2 A.
    \end{align*}
  \item\label{item:2} $\I(G_0, g_\nu) \simeq \E(1) \times \SO(2, 1)$,
    where $\E(1)$ is the Euclidean group in one dimension.
  \end{enumerate}
\end{theorem}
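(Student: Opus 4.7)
The plan is to execute, for each metric, the general procedure outlined earlier: compute the Ricci tensor to constrain where the isotropy lives inside $\so(\mathfrak g_0, g)$, verify candidates with Theorem \ref{sec:singers-theorem-lie}, and then assemble the full isometry Lie algebra by applying the initial-condition formulas (\ref{eq:15}) to the right-invariant generators $r_0, r_1, r_2$ and to the Killing field vanishing at $e$ with covariant derivative $A$.

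For part (1), with the diagonal metric $g_{\mu,\nu}$, I start with a Koszul computation using the brackets $[e_0, e_1]=0$, $[e_2, e_0]=e_1$, $[e_2, e_1]=2e_1$ and obtain the Ricci endomorphism; its semi-definite shape pins the isotropy subalgebra to lie inside the one-dimensional subspace $\so(\mathfrak g_0, g_{\mu,\nu}) \cap \mathfrak h$, where $\mathfrak h \simeq \so(2,1)$ is the stabilizer of $\Ric$ in $\mathfrak{gl}(\mathfrak g_0)$. A generator of this line is the operator $A$ defined, in the basis $e_0, e_1, e_2$, by
\begin{equation*}
  A e_0 = e_2, \qquad A e_1 = 2 e_2, \qquad A e_2 = -\nu e_0 - \tfrac{2\nu}{\mu} e_1,
\end{equation*}
whose skew-symmetry with respect to $g_{\mu,\nu}$ is a direct check. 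To certify that $A$ actually integrates to an isometry I invoke Theorem \ref{sec:singers-theorem-lie}: one verifies $A\cdot R_e = 0$ and, because the Singer invariant of a non-symmetric homogeneous $3$-manifold is at most $1$, also $A\cdot(\nabla R)_e = 0$. With $A$ in hand, the bracket relations in the statement fall out of (\ref{eq:15}): the values $[r_i, A]_e = A e_i$ are immediate, and the derivative parts $(\nabla[r_i, A])_e = -[(\nabla r_i)_e, A]$, after $(\nabla r_i)_e$ is extracted from Koszul, produce the right-hand sides---notably the $+2A$ contribution in $[r_2, A]$, which encodes the failure of $L(G_0)$ to be normal in the connected component of the isometry group.

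For part (2), with the non-diagonal metric $g_\nu$, the structural fact that unlocks everything is that the centre of $\mathfrak g_0$ is spanned by $z = 2 e_0 - e_1$ and satisfies $\langle z, e_1\rangle_{g_\nu} = 2 \cdot \tfrac12 - 1 = 0$, so $z$ is orthogonal to the derived algebra $[\mathfrak g_0, \mathfrak g_0] = \mathrm{span}(e_1)$ precisely in the metric $g_\nu$. By Koszul's formula this orthogonality forces the left-invariant vector field generated by $z$ to be \emph{parallel}, endowing $(G_0, g_\nu)$ with a globally defined parallel direction. Since $G_0$ is simply connected, the de Rham decomposition splits it isometrically as $\mathbb R \times N^2$, where $N^2$ is the totally geodesic leaf integrating $z^\perp = \mathrm{span}(e_1, e_2)$. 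The induced metric on $N^2$ is $\mathrm{diag}(1, \nu)$ in the basis $e_1, e_2$ of a two-dimensional non-abelian solvable algebra with bracket $[e_1, e_2] = -2 e_1$; a short curvature computation---or the observation that this is an Iwasawa presentation of the real hyperbolic plane---identifies $N^2$ with $H^2$. Consequently $(G_0, g_\nu) \simeq \mathbb R \times H^2$, and its full isometry group is $\I(\mathbb R) \times \I(H^2) \simeq \E(1) \times \SO(2,1)$.

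The steps I expect to be most delicate are the Singer certification in part (1)---which requires explicitly differentiating the curvature tensor and checking that $A$ annihilates the result, a coordinate-heavy but self-contained computation---and, in part (2), recognising that the off-diagonal entry $\tfrac12$ of $g_\nu$ is precisely what makes the centre of $\mathfrak g_0$ orthogonal to its derived subalgebra. This orthogonality is the single algebraic feature separating $g_\nu$ from the diagonal family $g_{\mu,\nu}$, and it promotes the isometry group---still four-dimensional---from the non-product extension $G_0 \cdot \SO(2)$ to the genuine Riemannian product $\E(1) \times \SO(2,1)$.
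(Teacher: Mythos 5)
Your strategy for both parts coincides with the paper's. For $g_{\mu,\nu}$ you intersect $\so(\mathfrak g_0, g_{\mu,\nu})$ with the stabilizer of $\Ric$ to isolate the candidate $A$ (your operator is exactly the matrix in (\ref{eq:5})), certify it via Theorem \ref{sec:singers-theorem-lie}, and read the brackets off (\ref{eq:15}) using $[r_i,Z]_e=Ae_i$ and $(\nabla[r_i,Z])_e=-[(\nabla r_i)_e,A]$; this is the paper's computation verbatim, and the $+2A$ term does come out correctly. For $g_\nu$ your parallel-central-field/de Rham argument is a mild repackaging of the paper's explicit change of frame $\hat e_0=2e_0-e_1$, $\hat e_1=\frac12 e_2$, $\hat e_2=e_1$: both hinge on the single observation that the off-diagonal entry $\frac12$ makes the center $g_\nu$-orthogonal to $[\mathfrak g_0,\mathfrak g_0]=\mathbb R e_1$, so that the metric splits off the central direction. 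Your version is clean and correct.

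Two steps are left hanging in part (1). First, you certify $A$ by checking only $A\cdot R_e=0$ and $A\cdot(\nabla R)_e=0$, on the strength of the claim that the Singer invariant of a non-symmetric homogeneous $3$-manifold is at most $1$. That bound is neither proved nor cited here, and Theorem \ref{sec:singers-theorem-lie} as stated requires $s<\frac{n(n-1)}{2}=3$; the paper accordingly also verifies $A\cdot(\nabla^2R)_e=0$. Either supply a reference for the Singer-invariant bound or perform the third check. Second, Singer's theorem only produces the \emph{connected component} of the isotropy group, and the statement $\I(G_0,g_{\mu,\nu})\simeq G_0\cdot\SO(2)$ requires knowing that the full isotropy group is exactly $\SO(2)$ and not, say, $\OO(2)$ or $\SO(2)\times\mathbb Z_2$ inside $\OO(\mathfrak g_0,g_{\mu,\nu})$. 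The paper closes this by arguing that a disconnected isotropy would force the geodesic symmetry at $e$ to be an isometry, contradicting that $g_{\mu,\nu}$ is not symmetric; you need this (or an equivalent) argument. Part (2) does not suffer from either issue, since there the isometry group of the Riemannian product $\mathbb R\times\mathbb RH^2$ is identified exactly.
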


\begin{proof}
  Let us consider first the case of $g = g_{\mu, \nu}$. It is not
  difficult to see that if $X$ is a left invariant vector field on
  $G_0$, that is also a Killing vector field, then $X$ must be a
  scalar multiple of $e_0 - \frac12 e_1 = r_0 - \frac12 r_1$, which is
  also right invariant. So, there are no new Killing vector fields
  arising as the difference of a left and a right invariant vector
  fields taking the same value at the identity.

  In order to compute the isometry group we first compute the
  orthogonal Lie algebra
  \begin{equation}\label{eq:6}
    \so(\mathfrak g_0, g_{\mu, \nu}) =
    \left\{
      \begin{pmatrix}
        0 & -a_{10} \mu & -a_{20} \nu \\[.4pc]
        a_{10} & 0 & -\frac{a_{21} \nu}{\mu} \\[.4pc]
        a_{20} & a_{21} & 0
      \end{pmatrix}: a_{10}, a_{20}, a_{21} \in \mathbb R
    \right\}.
  \end{equation}
  Now we compute the Ricci tensor of the metric, which in the left
  invariant frame $e_0, e_1, e_2$ takes the form
  \begin{equation*}
    \Ric =
    \begin{pmatrix}
      -\frac{\mu}{2 \, \nu} & -\frac{2 \, \mu}{\nu} & 0 \\[.4pc]
      -\frac{2 \, \mu}{\nu} & \frac{\mu(\mu - 8)}{2 \, \nu} & 0 \\[.4pc]
      0 & 0 & -\frac{\mu - 8}{2} 
    \end{pmatrix}
  \end{equation*}
  and it induces a non-degenerate, semi-definite, symmetric bilinear
  form on $\mathfrak g_0$. The connected component of the isotropy
  group is then identified with the inteserction
  \begin{equation}
    \so(\mathfrak g_0, g_{\mu, \nu}) \cap \so(\mathfrak g_0, \Ric) =
    \mathbb R A,
  \end{equation}
  where $A$ is the matrix
  \begin{equation}\label{eq:5}
    A =
    \begin{pmatrix}
      0 & 0 & - \nu \\
      0 & 0 & -\frac{2 \nu}{\mu} \\
      1 & 2 & 0
    \end{pmatrix}.
  \end{equation}
  Let us denote by $R$ the Riemannian curvature tensor of
  $g_{\mu, \nu}$. It is relatively easy to see that $A \cdot R_e = 0$
  and after some heavy calculations\footnote{Calculations for this
    paper where verified using the extension SageManifolds of the
    computer algebra system SageMath. The corresponding
    jupyter-notebooks can be downloaded from
    \href{https://www.fceia.unr.edu.ar/~reggiani/isom-dim-3/}{\url{https://www.fceia.unr.edu.ar/~reggiani/isom-dim-3/}}
  } we can check that $A \cdot (\nabla R)_e = 0$ and
  $A \cdot (\nabla^2 R)_e = 0$. Hence, by Theorem
  \ref{sec:singers-theorem-lie}, $A = (\nabla Z)_e$ for some Killing
  field such that $Z_e = 0$. This implies that the connected component
  of the isotropy group is a subgroup of
  $\SO(\mathfrak g_0, g_{\mu, \nu})$ isomorphic to $\SO(2)$. Since
  $\dim G_0 = 3$ and the metric is not symmetric, we get that the
  isotropy group is connected and so
  $\I(G_0, g_{\mu, \nu}) \simeq G_0 \cdot \SO(2)$. Otherwise, there
  would be non trivial isometries in the two connected componnets of
  $\OO(\mathfrak g_0, g_{\mu, \nu})$, which would imply that the
  geodesic symmetry is an isometry. In order to determine the Lie
  group structure of the isometry group, we identify
  $\mathcal K(G_0, g_{\mu, \nu}) \simeq \mathfrak g_0^r \oplus \mathbb
  R Z$ (direct sum of vector spaces), where $\mathfrak g_0^r$ is the
  Lie algebra of right invariant vector fields on $G_0$. Let us
  $r_0, r_1, r_2$ be the basis of right invariant vector fields
  definied above the theorem. In order to compute the brackets
  $[r_i, Z]$, we use the identities (\ref{eq:15}) and the fact that
  \begin{align*}
    (\nabla r_0)_e =
    \begin{pmatrix}
      0 & 0 & 0 \\
      0 & 0 & -\frac{1}{2} \\
      0 & \frac{\mu}{2 \, \nu} & 0
    \end{pmatrix},
        &&
           (\nabla r_1)_e =
           \begin{pmatrix}
             0 & 0 & -\frac{1}{2} \, \mu \\
             0 & 0 & -2 \\
             \frac{\mu}{2 \, \nu} & \frac{2 \, \mu}{\nu} & 0
           \end{pmatrix},
        &&
           (\nabla r_2)_e =
           \begin{pmatrix}
             0 & -\frac{1}{2} \, \mu & 0 \\
             \frac{1}{2} & 0 & 0 \\
             0 & 0 & 0
           \end{pmatrix}.
  \end{align*}
  
  Now we direct our attention to the metric $g = g_\nu$. If we define
  a new left invariant vector frame by
  \begin{align*}
    \hat e_0 = 2 e_0 - e_1,
    && \hat e_1 = \frac 12 \, e_2,
    && \hat e_2 = e_1, 
  \end{align*}
  then $\mathfrak z(\mathfrak g_0) = \mathbb R \hat e_0$ and $[\hat
  e_1, \hat e_2] = \hat e_2$. So $\mathfrak g_0 \simeq \mathbb R
  \oplus \mathfrak g_{\mathbb RH^2}$ (direct sum of Lie algebras),
  where $\mathfrak g_{\mathbb RH^2}$ is the Lie algebra of the real
  hyperbolic space $\mathbb RH^2$. Moreover, the left invariant metric
  $g_\nu$ takes now the form
  \begin{equation*}
    g_\nu = 3 \, \hat e^{0}\otimes \hat e^{0} + \frac{1}{4} \, \nu
    \hat e^{1}\otimes \hat e^{1} + \hat e^{2}\otimes \hat e^{2}.
  \end{equation*}
  Thus, $g_\nu$ splits off its center and $(G_0, g_\nu)$ is isometric
  to (a rescaling of) the Riemannian product
  $\mathbb R \times \mathbb RH^2$. In fact, as we noticed in the proof
  of Theorem \ref{sec:case-g_i}, the restriction of $g_\nu$ to
  $\mathfrak g_{\mathbb RH^2}$ is unique up to automorphism and
  scaling. Therefore, the isometry group of $(G_0, g_\nu)$ is
  isomorphic to $\E(1) \times \SO(2, 1)$.
\end{proof}

\begin{remark}
  \label{sec:case-g_0-1}
  If two left invariant metrics on $G_0$ are not equivalent up to
  isometric automorphism or scaling, then the corresponding isometry
  groups are not isomorphic. In fact, the case when one of the metrics
  is equivalent of some $g_\nu$ is trivial. So we only need to
  consider metrics of the form $g_{\mu, \nu}$, and we can assume
  further that $\nu = 1$. In these conditions, the Killing form
  distinguish the Lie algebras given in item \ref{item:1} of Theorem
  \ref{sec:case-g_0}. In fact, it is easy to see that if $\nu = 1$,
  the eigenvalues of the Killing form of the isometry Lie algebra of
  $g_{\mu, 1}$ are
  \begin{equation*}
    -\frac{\mu + 4 \pm \sqrt{81 \mu^{2} + 8 \mu + 16}}{\mu}, 0, 8.
  \end{equation*}
\end{remark}

\subsection{The case of $G_1$}
Taking $c = 1$ in (\ref{eq:4}) we can compute explicit expressions for
the left and right invariant vector frames, which will be used in
further calculations. The left invariant vector frame $e_0, e_1, e_2$
is given by
\begin{align*}
  e_0 = {\left(1 - x_{2}\right)} e^{x_{2}} \frac{\partial}{\partial
    x_{0} } + x_{2} e^{x_{2}} \frac{\partial}{\partial x_{1} },
  && e_1 = -x_{2} e^{x_{2}} \frac{\partial}{\partial x_{0} } +
     {\left(1 + x_{2}\right)} e^{x_{2}} \frac{\partial}{\partial x_{1}
     },
  && e_2 = \frac{\partial}{\partial x_{2} },
\end{align*}
and the corresponding right invariant vector frame is
\begin{align*}
  r_0 = \frac{\partial}{\partial x_{0} },
  && r_1 = \frac{\partial}{\partial x_{1} },
  && r_2 = -x_{1} \frac{\partial}{\partial x_{0} } + \left( x_{0} + 2
     x_{1} \right) \frac{\partial}{\partial x_{1} }
     +\frac{\partial}{\partial x_{2} }.
\end{align*}

\begin{theorem} Let $g$ be a left invariant metric on $G_1$, then
  \label{sec:case-g_1}
  \begin{equation*}
    \I(G_1, g) \simeq G_1.
  \end{equation*}
\end{theorem}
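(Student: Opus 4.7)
The plan is to apply the general strategy laid out at the beginning of Section~\ref{sec:full-isometry-groups} to both families of metrics on $G_1$, namely $g_{\mu,\nu}$ with $0<\mu\le 1$ and $g'_{\lambda,\nu}$ with $0<\lambda<1$, and show in each case that the isotropy $G_e$ is trivial. Since $\I(G_1,g)\simeq G_1\cdot G_e$ from equation~\eqref{eq:14}, this will give $\I(G_1,g)\simeq G_1$.

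First, I would compute, for each metric $g$, the orthogonal Lie algebra $\so(\mathfrak g_1,g)$ as a $3$-dimensional subspace of $\mathfrak{gl}(\mathfrak g_1)$ (analogous to~\eqref{eq:6}). Then I would write down the Ricci tensor of $g$ in the left invariant frame $e_0,e_1,e_2$ and verify directly that it is non-degenerate and semi-definite (indefinite) with non-constant eigenvalues, so that it is not a scalar multiple of $g$. Following the outline preceding Subsection~3.1, any element of the connected isotropy Lie algebra $\mathfrak g_e$ must lie in the intersection
\begin{equation*}
  \so(\mathfrak g_1,g)\cap\mathfrak h,
\end{equation*}
where $\mathfrak h\simeq\so(2,1)$ is the Lie subalgebra of $\mathfrak{gl}(\mathfrak g_1)$ preserving the Ricci form. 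The core computation is to verify, case by case, that this intersection is $\{0\}$; this is a direct linear algebra check using the explicit matrix expressions. Consequently $G_e^0$ is trivial.

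It remains to rule out discrete isotropy. Since $G_e$ is a compact subgroup of $\OO(\mathfrak g_1,g)\simeq\OO(3)$ with trivial Lie algebra, it is a finite subgroup. Each of its elements must preserve both $g$ and the Ricci tensor, so they belong to the (finite) group $\OO(\mathfrak g_1,g)\cap\OO(\mathfrak g_1,\Ric)$. I would diagonalize $\Ric$ with respect to $g$, and use the distinct eigenvalues (together with, if needed, the fact that any isometry must also preserve the full curvature tensor and its covariant derivatives at $e$) to enumerate the finitely many candidate orthogonal transformations and discard each non-identity one. An alternative, cleaner argument available whenever $\Ric$ has three distinct eigenvalues: the isotropy is contained in $\mathbb Z_2^3$ acting by signs on the eigenlines, and one then just tests preservation of $R_e$ to eliminate each non-trivial sign combination.

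The main obstacle will be the discrete part of the argument. In contrast to Theorem~\ref{sec:case-g_0}, where a non-trivial connected isotropy $\SO(2)$ made the ``otherwise the geodesic symmetry is an isometry'' shortcut available, here $G_e^0$ is trivial, so there is no $\SO(2)$-orbit to combine with an orientation-reversing element. One must therefore genuinely check that the candidate finite symmetries fail to preserve either the Lie bracket structure or the curvature, and this may require different explicit sign arguments for $g_{\mu,\nu}$ and for $g'_{\lambda,\nu}$, with the boundary cases $\mu=1$ and $\lambda\to 0$ warranting particular attention since they sit on the closure of the moduli space and are the most likely to exhibit extra hidden symmetry.
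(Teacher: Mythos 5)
Your proposal follows the same core route as the paper: for each of the two families $g_{\mu,\nu}$ and $g'_{\lambda,\nu}$ one writes down $\so(\mathfrak g_1,g)$ and $\Ric$ in the frame $e_0,e_1,e_2$, checks by linear algebra that $\so(\mathfrak g_1,g)\cap\so(\mathfrak g_1,\Ric)=0$, and concludes that the connected isotropy is trivial. Where you genuinely diverge is in the discrete part: the paper dismisses finite isotropy with the single remark that the metric is not symmetric, whereas you carry out an honest analysis of the finite subgroup of $\OO(\mathfrak g_1,g)\cap\OO(\mathfrak g_1,\Ric)$ via the eigenlines of the Ricci operator and preservation of $R_e$. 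This is the stronger argument, and arguably a needed one, since ``trivial connected isotropy plus non-symmetric'' does not in general force trivial full isotropy (generic left invariant metrics on $3$-dimensional unimodular groups have isotropy $\mathbb Z_2\times\mathbb Z_2$ without being symmetric); your observation that the geodesic-symmetry shortcut used for $G_0$ is unavailable here is exactly right. Two small cautions. First, do not build the argument on non-degeneracy of $\Ric$: for $g'_{\lambda,\nu}$ the Ricci form \emph{is} degenerate at $\lambda=\sqrt5-2$ (the paper notes this explicitly), and also ``non-degenerate and semi-definite (indefinite)'' is self-contradictory as written; but none of this matters, since the intersection $\so(\mathfrak g_1,g)\cap\so(\mathfrak g_1,\Ric)$, and likewise the finite group $\OO(\mathfrak g_1,g)\cap\OO(\mathfrak g_1,\Ric)$, are computed directly from $A^T\Ric+\Ric A=0$ regardless of the signature of $\Ric$. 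Second, when the $g$-self-adjoint Ricci operator has a repeated eigenvalue the reduction to $\mathbb Z_2^3$ fails and you must fall back on preservation of $R_e$ (and, if necessary, $(\nabla R)_e$) for a larger centralizer, as you anticipate; be sure this is actually carried out for the exceptional parameter values.
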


\begin{proof}
  Assume first that $g = g_{\mu, \nu}$ in
  Table~\ref{tab:left-invariant-metrics}. Notice that in the frame
  $e_0, e_1, e_2$  the orthogonal Lie algebra has the same matrix
  representation as in (\ref{eq:6}), but with the constrain $0 < \mu
  \le 1$. The Ricci tensor of $g_{\mu, \nu}$ is represented in our
  frame by the matrix
  \begin{equation*}
    \Ric =
    \begin{pmatrix}
      -\frac{\mu^{2} - 1}{2 \, \mu \nu} & -\frac{2 \, \mu}{\nu} & 0 \\
      -\frac{2 \, \mu}{\nu} & \frac{\mu^{2} - 8 \, \mu - 1}{2 \, \nu} & 0 \\
      0 & 0 & -\frac{\mu^{2} + 6 \, \mu + 1}{2 \, \mu}
    \end{pmatrix}.
  \end{equation*}
  After standard computations we get that
  $A \in \so(\mathfrak g_1, g_{\mu, \nu})$ is also an
  element of $\so(\mathfrak g_1, \Ric)$ if and only if $a_{10} = 0$ and
  \begin{equation*}
    a_{20}(1 + 3\mu) - 2 \, a_{21} = 2 \,
    a_{20} \mu + a_{21}(1 - \mu)  = 0
  \end{equation*}
  which trivially implies $a_{20} = a_{21} = 1$ (recall that we are
  assuming that $A$ has the form given in (\ref{eq:6})). Since the
  metric is not symmetric, we get that the isotropy group of
  $\I(G_1, g_{\mu, \nu})$ is trivial.

  Finally, for the case of $g = g'_{\lambda, \nu}$ we have
  \begin{equation*}
    \so(\mathfrak g_1, g'_{\lambda, \nu}) =
    \left\{
      \begin{pmatrix}
        -a_{11} & -\frac{a_{11}}{{\lambda}} & -\frac{{\left(a_{21}
              {\lambda} - a_{20}\right)} \nu}{{\lambda}^{2} - 1} \\
        \frac{a_{11}}{{\lambda}} & a_{11} & -\frac{{\left(a_{20}
              {\lambda} - a_{21}\right)} \nu}{{\lambda}^{2} - 1} \\ 
        a_{20} & a_{21} & 0
      \end{pmatrix}: a_{11}, a_{20}, a_{21} \in \mathbb R
    \right\}
  \end{equation*}
  and
  \begin{equation}\label{eq:7}
    \Ric =
    \begin{pmatrix}
      -\frac{4 \, {\lambda}}{{\left({\lambda} + 1\right)} \nu} &
      -\frac{2 \, {\left({\lambda}^{2} + 1\right)}}{{\left({\lambda} +
            1\right)} \nu} & 0 \\ 
      -\frac{2 \, {\left({\lambda}^{2} + 1\right)}}{{\left({\lambda} +
            1\right)} \nu} & \frac{4 \, {\left({\lambda}^{2} -
            {\lambda} - 1\right)}}{{\left({\lambda} + 1\right)} \nu} &
      0 \\  
      0 & 0 & -\frac{4}{{\lambda} + 1}
    \end{pmatrix}.
  \end{equation}
  Reasoning as in the above paragraph we prove that
  $\so(\mathfrak g_1, g'_{\lambda, \nu}) \cap \so(\mathfrak g_1, \Ric)
  = 0$. Notice that we are making here some abuse of notation, since
  $\Ric$ in (\ref{eq:7}) is degenerate when $\lambda = \sqrt 5 -
  2$. Using again that the metric is not symmetric, we conclude that
  the full isometry group is isomorphic to $G_1$.
\end{proof}

\subsection{The case of $G_c$, $c < 0$}

In order to simplify the notation and make some calculations easier, we
write 
\begin{equation*}
  c = 1 - c_1^2, \qquad \text{ for } c_1 > 1.
\end{equation*}

This trick is very useful when checking our calculations with
SageMath, since this software finds it very hard simplifying certain
expressions where $\sqrt{1 - c}$ appears, which now are just replaced
by $c_1$.  The left invariant frame in which the metrics of Table
\ref{tab:left-invariant-metrics} are represented is given by
\begin{align}
  e_0 &= e^{x_2}\left(\cosh(c_1 x_2) - \frac{\sinh(c_1 x_2)}{c_1} \right)
  \frac{\partial}{\partial x_0} +  \frac{e^{x_2} \sinh(c_1 x_2)}{c_1}
        \frac{\partial}{\partial x_1}, \notag \\
  e_1 & = -\frac{c \, e^{x_2} \sinh(c_1 x_2)}{c_1}
        \frac{\partial}{\partial x_0} + e^{x_2}\left(\cosh(c_1 x_2) +
        \frac{\sinh(c_1 x_2)}{c_1}\right) \frac{\partial}{\partial
        x_1}, \label{eq:8} \\
  e_2 & = \frac{\partial}{\partial x_2}, \notag
\end{align}
and the associated right invariant vector frame is
\begin{align}\label{eq:9}
  r_0 = \frac{\partial}{\partial x_0},
  && r_1  = \frac{\partial}{\partial x_1},
  && r_2  = -c x_1 \frac{\partial}{\partial x_0} + \left(x_0 +
     2x_1\right)\frac{\partial}{\partial x_1} + \frac{\partial}{\partial
     x_2}. 
\end{align}

\begin{theorem}
  \label{sec:case-g_c-c}
  If $c < 0$, then
  \begin{equation*}
    \I(G_c, g) \simeq G_c
  \end{equation*}
\end{theorem}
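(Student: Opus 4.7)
The plan is to mimic the strategy used in the proof of Theorem~\ref{sec:case-g_1}: reduce the computation to a linear-algebraic intersection problem inside $\so(\mathfrak g_c, g_{\mu,\nu})$ and use the fact that the metric is not symmetric to rule out the disconnected part of the isotropy. Since the only family of left invariant metrics in Table~\ref{tab:left-invariant-metrics} for $c<0$ is $g_{\mu,\nu}$ with $0<\mu\le|c|$ and $0<\nu$, it suffices to handle this one family.

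First, using the left invariant frame $e_0,e_1,e_2$ written in~(\ref{eq:8}) and the bracket relations~(\ref{eq:2}) specialized to $c=1-c_1^2<0$, I would compute the Levi-Civita connection via the Koszul formula and then the Ricci tensor of $g_{\mu,\nu}$ in this frame. Next I would describe $\so(\mathfrak g_c, g_{\mu,\nu})$: since the Gram matrix of $g_{\mu,\nu}$ is diagonal, the resulting matrix description coincides with~(\ref{eq:6}) up to a relabelling, and depends on three free parameters $a_{10}, a_{20}, a_{21}$. Then I would impose the condition $A\cdot \Ric=0$, which amounts to requiring $A^T R + R A = 0$ where $R$ is the Ricci matrix. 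This yields a small linear system on $(a_{10},a_{20},a_{21})$ whose coefficients are rational functions of $\mu$, $\nu$ and $c_1$.

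The main obstacle is showing that this linear system has only the trivial solution for every admissible parameter $(\mu,\nu)$ with $c<0$. The expected mechanism is the same as in Theorem~\ref{sec:case-g_1}: the Ricci tensor of $g_{\mu,\nu}$ should be semi-definite with three distinct eigenvalues, so that any skew-symmetric endomorphism preserving both $g_{\mu,\nu}$ and $\Ric$ must vanish. I would verify this by combining the hypotheses $c<0$ and $0<\mu\le|c|$ to control the signs of the off-diagonal entries and of the determinant of the relevant $2\times 2$ minor arising from the system; the SageMath notebooks cited in the footnote provide a safety net for checking the algebra.

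Having established that $\so(\mathfrak g_c, g_{\mu,\nu}) \cap \so(\mathfrak g_c, \Ric)=0$, Theorem~\ref{sec:singers-theorem-lie} forces the connected component of the isotropy group at $e$ to be trivial. As in the generic discussion at the start of Section~\ref{sec:full-isometry-groups}, the metric $g_{\mu,\nu}$ is not symmetric (which I would confirm by exhibiting a nonzero component of $\nabla R$, or by noting that the three Ricci eigenvalues are distinct while the associated eigenframe fails to be parallel), so the geodesic symmetry at $e$ is not an isometry; this excludes any nontrivial element of $\OO(\mathfrak g_c, g_{\mu,\nu})$ from being realized by an isometry fixing $e$. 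Combining with the decomposition $\I(G_c,g_{\mu,\nu})\simeq G_c\cdot G_e$ from~(\ref{eq:14}), we conclude $G_e=\{1\}$ and therefore $\I(G_c,g_{\mu,\nu})\simeq G_c$, as desired.
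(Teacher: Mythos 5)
Your proposal follows essentially the same route as the paper's proof: reduce to the family $g_{\mu,\nu}$, compute the Ricci tensor in the frame $e_0,e_1,e_2$, verify that $\so(\mathfrak g_c, g_{\mu,\nu}) \cap \so(\mathfrak g_c, \Ric) = 0$ (equivalently, that the Ricci operator has three distinct eigenvalues), and use non-symmetry to rule out a nontrivial discrete isotropy. The only cosmetic difference is that the paper deduces non-symmetry directly from the vanishing of that intersection rather than checking $\nabla R \neq 0$ separately, but this does not change the argument.
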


\begin{proof}
  The proof is similar to the ones of previous cases. We may assume
  that $g = g_{\mu, \nu}$ and hence the orthogonal Lie algebra can be
  presented as in (\ref{eq:6}). For brevity, we present here the Ricci
  tensor in terms of $c$ (instead of $c_1$):
  \begin{equation*}
    \Ric =
    \begin{pmatrix}
      \frac{c^2 - \mu^{2}}{2 \, \mu \nu} & -\frac{2 \,
        \mu}{\nu} & 0 \\ 
      -\frac{2 \, \mu}{\nu} & -\frac{c^2 + 8 \mu (1 -
        \mu)}{2 \, \nu} & 0 \\ 
      0 & 0 & -\frac{(c - \mu)^2 -8 \mu}{2 \, \mu}
    \end{pmatrix}.
  \end{equation*}
  Now it is not hard to see that $\so(\mathfrak g_c, g_{\mu, \nu})
  \cap \so(\mathfrak g_c, \Ric) = 0$. In particular, this shows that
  the metric is not symmetric and hence $\I(G_c, g_{\mu, \nu}) \simeq G_c$.
\end{proof}

\subsection{The case of $G_c$, $0 < c < 1$}
As in the previous case, we write $c = 1 - c_1^2$ and hence the left
and right invariant vector frames have now the exact same form as in
(\ref{eq:8}) and (\ref{eq:9}), respectively, but now $0 < c_1 < 1$.

\begin{theorem}
  \label{sec:case-g_c-0}
  If $0 < c  < 1$ and $g$ is a left invariant metric on $G_c$, then
  \begin{equation*}
    \I(G_c, g) \simeq G_c.
  \end{equation*}
\end{theorem}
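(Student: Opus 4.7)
The plan is to follow the same template as the proof of Theorem \ref{sec:case-g_c-c}. Because the inner product now has the form $g_{\mu,\nu} = P^T M_{\mu,\nu} P$ with $P$ as in (\ref{eq:1}) and $M_{\mu,\nu}$ the symmetric matrix with $1$'s on the diagonal of the top $2\times 2$ block and $\mu$ off-diagonal, the orthogonal Lie algebra $\so(\mathfrak g_c, g_{\mu,\nu})$ is no longer given by the clean expression (\ref{eq:6}). I would first compute it explicitly in the basis $e_0, e_1, e_2$ as the three-parameter family of $3\times 3$ matrices $A$ satisfying $A^T(P^T M_{\mu,\nu} P) + (P^T M_{\mu,\nu} P)A = 0$, parametrized by three free entries; using the substitution $c = 1 - c_1^2$ with $0 < c_1 < 1$ keeps the coefficients rational in $c_1$, $\mu$, $\nu$, which is essential for SageMath to handle the symbolic simplifications.

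Next I would compute the Ricci tensor of $g_{\mu,\nu}$ in the same basis. As in Theorem \ref{sec:case-g_c-c}, the key observation is that any element of the connected isotropy group, viewed via the isotropy representation as an element of $\OO(\mathfrak g_c, g_{\mu,\nu})$, must preserve $\Ric$, so its Lie algebra sits inside $\so(\mathfrak g_c, g_{\mu,\nu}) \cap \so(\mathfrak g_c, \Ric)$. The crucial step is to show that this intersection is $0$. Writing a generic $A \in \so(\mathfrak g_c, g_{\mu,\nu})$ and imposing the skew-symmetry of $A$ with respect to $\Ric$ gives a homogeneous linear system in the three free parameters whose coefficient matrix I expect to be non-singular for all admissible $(\mu, \nu, c_1)$ in the range $0 \le \mu < 1$, $\nu > 0$, $0 < c_1 < 1$.

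The main obstacle is exactly this last non-singularity check: the off-diagonal action of $P$ couples the parameters $a_{10}, a_{20}, a_{21}$ in a non-trivial way, and the determinant of the resulting $3\times 3$ system will be a somewhat unwieldy rational expression in $\mu$, $\nu$, $c_1$. To handle it cleanly I would factor out obvious non-vanishing prefactors (powers of $\nu$, of $1-\mu^2$, and of $c_1$, which are all strictly positive in the parameter range) and verify that the remaining polynomial factor has no zeros for $0 \le \mu < 1$ and $0 < c_1 < 1$. One must also watch out for any values where $\Ric$ becomes degenerate (compare the treatment of the exceptional value $\lambda = \sqrt 5 - 2$ in the proof of Theorem \ref{sec:case-g_1}); at such points one needs a separate direct argument, again of the form $\so(\mathfrak g_c, g_{\mu,\nu}) \cap \so(\mathfrak g_c, \Ric) = 0$.

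Once triviality of the intersection is established, exactly as in the previous theorem the metric cannot be symmetric (a three-dimensional symmetric space has isotropy of dimension at least $1$), so the connected isotropy group at the identity is trivial. Since the isotropy group is compact and embeds in $\OO(3)$, it would otherwise contribute a non-trivial connected component only if the geodesic symmetry were an isometry, which again forces the metric to be symmetric. Hence $G_e$ is trivial, and from the decomposition $\I(G_c, g) \simeq G_c \cdot G_e$ in (\ref{eq:14}) we conclude $\I(G_c, g_{\mu,\nu}) \simeq G_c$, finishing the proof.
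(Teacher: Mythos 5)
Your proposal is correct and follows essentially the same route as the paper: express $g_{\mu,\nu}=P^{T}M_{\mu,\nu}P$ explicitly in the frame $e_0,e_1,e_2$, compute $\Ric$, verify $\so(\mathfrak g_c, g_{\mu,\nu}) \cap \so(\mathfrak g_c, \Ric)=0$, and conclude via non-symmetry of the metric and the decomposition (\ref{eq:14}). Your extra attention to possible degeneracies of $\Ric$ and to ruling out isotropy in the other connected component is a sensible elaboration of details the paper leaves implicit, not a different argument.
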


\begin{proof}
  We take $g = g_{\mu, \nu}$. Recall that in this case the
  coefficients of the metric with respect to the frame $e_0, e_1, e_2$
  are rather involved. In fact, it needs to be computed as in Table
  \ref{tab:left-invariant-metrics} using the matrix $P$ as
  in~(\ref{eq:1}). After some suitable simplifications we can express,
  in the usual frame, the metric as
  \begin{equation}\label{eq:12}
    g_{\mu, \nu} =
    \begin{pmatrix}
      \frac{c \mu + c - 2}{2 \, {\left(c - 1\right)} c^{2}} & \frac{\mu - 1}{2 \, {\left(c - 1\right)} c} & 0 \\
      \frac{\mu - 1}{2 \, {\left(c - 1\right)} c} & \frac{\mu - 1}{2 \, {\left(c - 1\right)}} & 0 \\
      0 & 0 & \nu
    \end{pmatrix}
  \end{equation}
  and the Ricci tensor as
  \begin{equation*}
    \Ric = 
    \begin{pmatrix}
      \frac{{\left(\mu^{2} + \mu\right)} c^{2} + 2 \, \mu^{2} -
        {\left(\mu^{3} + 2 \, \mu^{2} + 1\right)} c}{(1 - \mu^2) \nu
        {\left(1 - c\right)} c^{2}}
      & \frac{\mu^{2} - c}{{\left(\mu + 1\right)} \nu {\left(1 -
            c\right)} c} & 0 \\  
      \frac{\mu^{2} - c}{{\left(\mu + 1\right)} \nu {\left(1 - c\right)} c}
      & -\frac{\mu^{2} + \mu t - \mu - 1}{{\left(\mu + 1\right)} \nu
        {\left(t - 1\right)}} & 0 \\
      0 & 0 & \frac{2 \, {\left(\mu^{2} + c - 2\right)}}{{\left(1 -
            \mu^2\right)}} 
    \end{pmatrix}.
  \end{equation*}

  The same argument used previously give us that
  $\so(\mathfrak g_c, g_{\mu, \nu}) \cap \so(\mathfrak g_c, \Ric) = 0$
  and since the metric is not symmetric,
  $\I(G_c, g_{\mu, \nu}) \simeq G_c$.
\end{proof}

\subsection{The case of $G_c$, $1 < c$}

We write $c = 1 + c_1^2$ with $c_1 > 0$. We can perform some formal
manipulation in (\ref{eq:8}) in order to compute the left invariant
frame for $\mathfrak g_c$ in an easy way. In fact, rewriting
$c = 1 - (i c_1)^2$, we can present the left invariant vector fields
as in (\ref{eq:8}) replacing $c_1$ by $i c_1$. Now, from
$\cosh(iz) = \cos z$ and $\sinh(iz) = i \sin z$ follows that
\begin{align}
  e_0 &= e^{x_2}\left(\cos(c_1 x_2) - \frac{\sin(c_1 x_2)}{c_1} \right)
  \frac{\partial}{\partial x_0} +  \frac{e^{x_2} \sin(c_1 x_2)}{c_1}
        \frac{\partial}{\partial x_1}, \notag \\
  e_1 & = -\frac{c \, e^{x_2} \sin(c_1 x_2)}{c_1}
        \frac{\partial}{\partial x_0} + e^{x_2}\left(\cos(c_1 x_2) +
        \frac{\sin(c_1 x_2)}{c_1}\right) \frac{\partial}{\partial
        x_1}, \label{eq:8} \\
  e_2 & = \frac{\partial}{\partial x_2}, \notag
\end{align}
 and the associated right invariant vector frame is
\begin{align}\label{eq:9}
  r_0 = \frac{\partial}{\partial x_0},
  && r_1  = \frac{\partial}{\partial x_1},
  && r_2  = -c x_1 \frac{\partial}{\partial x_0} + (x_0 +
     2x_1)\frac{\partial}{\partial x_1} + \frac{\partial}{\partial
     x_2}. 
\end{align}

\begin{theorem}
  \label{sec:case-g_c-1}
  If $1 < c$ and $g_{\mu, c}$ is the left invariant metric on $G_c$
  described in Table \ref{tab:left-invariant-metrics}, then
  \begin{equation*}
    \I(G_c, g_{\mu, \nu} ) \simeq
    \begin{cases}
      G_c & \text{ if } 1 < \mu < c, \\
      \SO(3, 1) & \text{ if } \mu = c.
    \end{cases}
\end{equation*}
\end{theorem}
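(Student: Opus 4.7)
The plan is to proceed along the same lines as the previous theorems in this section, splitting the analysis according to whether $\mu < c$ (generic) or $\mu = c$ (exceptional).

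First, I will write down the orthogonal Lie algebra $\so(\mathfrak g_c, g_{\mu,\nu})$ with respect to the metric of Table~\ref{tab:left-invariant-metrics}, i.e.\ the space of matrices in the frame $e_0,e_1,e_2$ that are skew with respect to the symmetric matrix $\left(\begin{smallmatrix} 1 & 1 & 0 \\ 1 & \mu & 0 \\ 0 & 0 & \nu \end{smallmatrix}\right)$. Next I will compute the Ricci tensor $\Ric$ using the bracket relations \eqref{eq:2} (with $c=1+c_1^2$) together with the Koszul formula; since the expressions are rational in $\mu,\nu,c$, this is best handled with SageMath as indicated in the footnote to Theorem~\ref{sec:case-g_0}. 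The key output will be a $3\times 3$ symmetric matrix whose entries I will parametrize so as to compare it cleanly with $g_{\mu,\nu}$.

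For the generic range $1<\mu<c$, the strategy is exactly that of Theorems \ref{sec:case-g_1}, \ref{sec:case-g_c-c} and \ref{sec:case-g_c-0}: I expect to verify that
\begin{equation*}
  \so(\mathfrak g_c, g_{\mu,\nu}) \cap \so(\mathfrak g_c, \Ric) = 0.
\end{equation*}
This is a linear computation in the three parameters defining $\so(\mathfrak g_c, g_{\mu,\nu})$. Once this intersection vanishes the Ricci tensor is either non-degenerate semi-definite or has three distinct eigenvalues, so no non-trivial skew endomorphism of $T_e G_c$ can be induced by an isometry fixing $e$; combined with the fact that the metric is not symmetric, this forces the isotropy to be trivial and hence $\I(G_c, g_{\mu,\nu}) \simeq G_c$.

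The delicate case is $\mu=c$. Here I expect the intersection above to become three-dimensional, signalling that the metric is of constant curvature. The cleanest way to confirm this is to compute the sectional curvatures in the orthonormal frame obtained from $e_0, e_1, e_2$ after diagonalization of $g_{c,\nu}$ and show that they all take the same negative value (independent of the plane chosen). Equivalently, one can exhibit an explicit Lie algebra isomorphism (or at least a linear isometry between inner product spaces) identifying $(\mathfrak g_c, g_{c,\nu})$ with a rescaling of the hyperbolic Lie algebra $(\mathfrak g_I, g_\nu)$ from Theorem \ref{sec:case-g_i}; since the hyperbolic metric is the unique left invariant metric on $G_I$ up to automorphism and scaling, this identification is essentially forced once constant curvature is established. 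The main obstacle will be this constant-curvature verification when $\mu=c$: the algebra $\mathfrak g_c$ is not isomorphic to $\mathfrak g_I$, so the isometry with $H^3$ must come from an exterior automorphism of the Riemannian structure and will be invisible at the purely Lie-theoretic level. This is precisely the phenomenon advertised in the introduction as Remark~\ref{sec:case-g_c-1-1}. Once $(G_c,g_{c,\nu})$ is identified, up to rescaling, with $H^3$, we conclude $\I(G_c,g_{c,\nu}) \simeq \SO(3,1)$ as the full isometry group of three-dimensional real hyperbolic space.
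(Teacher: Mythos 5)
Your proposal is correct and follows essentially the same route as the paper: compute $\so(\mathfrak g_c, g_{\mu,\nu})$, intersect with the stabilizer of $\Ric$ (the resulting equations all carry a factor of $\mu - c$), conclude trivial isotropy for $1<\mu<c$, and identify $(G_c, g_{c,\nu})$ with hyperbolic space when $\mu=c$. The only cosmetic difference is in the exceptional case: the paper simply observes that $\Ric = -\tfrac{2}{\nu}\, g_{c,\nu}$, so the metric is Einstein and hence (in dimension $3$) of constant curvature $-\tfrac{1}{\nu}$, which is a slightly quicker way to reach the constant-curvature conclusion than your proposed sectional-curvature computation.
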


\begin{proof}
  The orthogonal Lie algebra of $g_{\mu, \nu}$ is given by
  \begin{equation}\label{eq:10}
    \so(\mathfrak g_c, g_{\mu,\nu}) =
    \left\{
      \begin{pmatrix}
        -a_{11} & -a_{11} \mu & -\frac{{\left(a_{20} \mu - a_{21}\right)} \nu}{\mu - 1} \\
        a_{11} & a_{11} & \frac{{\left(a_{20} - a_{21}\right)} \nu}{\mu - 1} \\
        a_{20} & a_{21} & 0
      \end{pmatrix}: a_{11}, a_{20}, a_{21} \in \mathbb R
    \right\}
  \end{equation}
  in the frame $e_0, e_1, e_2$. Since the matrix representation of the
  Ricci tensor becomes too involved, we rather give its components
  with respect the same frame:
  \begin{align*}
    \Ric_{00} & = \frac{c_{1}^{4} - \mu^{2} - 2 \, \mu + 3}{2 \,
                {\left(\mu - 1\right)} \nu} = \frac{(c - 1)^2 - (\mu +
                1)^2 + 4}{2 \, \left(\mu - 1\right) \nu}, \\
    \Ric_{01} & =  \frac{c_{1}^{4} + 2 \, c_{1}^{2} \mu - 2 \,
                c_{1}^{2} - 3 \, \mu^{2} + 2 \, \mu + 1}{2 \,
                {\left(\mu - 1\right)} \nu} = \frac{(c + \mu)^2 - 4(\mu^2 + t - 1)}{2 \,
                {\left(\mu - 1\right)} \nu}, \\ 
    \Ric_{02} & = 0, \\
    \Ric_{11} & = -\frac{c_{1}^{4} \mu - 2 \, c_{1}^{4} - 4 \,
                c_{1}^{2} \mu - \mu^{3} + 4 \, c_{1}^{2} + 12 \,
                \mu^{2} - 17 \, \mu + 6}{2 \, {\left(\mu - 1\right)}
                \nu} \\
              & = \frac{ {\left(2 - \mu \right)} c^{2} + {\left(6
                \, \mu - 8\right)} c + \mu(\mu^{2} - 12 \,
                \mu + 12)}{2 \, {\left(\mu - 1\right)} \nu},
    \\ 
    \Ric_{12} & = 0, \\
    \Ric_{22} & =  -\frac{c_{1}^{4} - 2 \, c_{1}^{2} \mu + 2 \, c_{1}^{2} +
        \mu^{2} + 2 \, \mu - 3}{2 \, {\left(\mu - 1\right)}} =
                -\frac{(c - \mu)^2 + 4(\mu - 1)}{2(\mu - 1)} .
  \end{align*}

  Let $A$ be defined as in (\ref{eq:10}). The condition $A \in
  \so(\mathfrak g_c, \Ric)$, or equivalently
  \begin{equation*}
    A^T \Ric + \Ric A = 0
  \end{equation*}
  is given by the equations
  \begin{equation*}
    \frac{2 \, a_{11} {\left(c - \mu\right)}}{\nu} =
    \frac{{\left(a_{20}(c - 2) + a_{21}\right)} {\left(\mu -
          c\right)}}{\mu - 1} =  \frac{{\left( a_{20}(2 \mu  +
          c - 4 ) +  a_{21} (2 - \mu) \right)}
      {\left(\mu - c\right)}}{\mu - 1} = 0.
  \end{equation*}

  Notice that if $\mu \neq c$, then the only possible solution is $A =
  0$. In this case, we use the same ideas from previous cases to prove
  $\I(G_c, g_{\mu, \nu}) \simeq G_c$.

  Finally, assume that $\mu = c$. In this case, one can see that $\Ric
  = - \frac{2}{\nu} \, g_{c, \nu}$ and so the metric on $G_c$ is
  Einstein. Moreover, $(G_c, g_{c, \nu})$ is isometric to the
  hyperbolic space of curvature $-\frac{1}{\nu}$ and hence its
  isometry group is isomorphic to $\SO(3, 1)$.
\end{proof}

\begin{remark}
  \label{sec:case-g_c-1-1}
  Putting together Theorems \ref{sec:case-g_i} and
  \ref{sec:case-g_c-1} one can construct and infinite family of pairs
  of solvable Lie groups endowed with a left invariant metric, say
  $(S, g)$ and $(S', g')$, such that $(S, g)$ is isometric to
  $(S', g')$ but $S$ is not isomorphic to $S'$. In fact, if $0 < \nu$
  and $1 < c$, then $(G_I, g_\nu)$ and $(G_c, g_{c, \nu})$ are both
  isometric to the $3$-dimensional hyperbolic space of curvature
  $-\frac{1}{\nu}$.
\end{remark}

\section{Computation of the index of symmetry}

With the results obtained in Section \ref{sec:full-isometry-groups},
we can easily compute the index of symmetry for all the non-unimodular
Lie groups of dimension $3$. Let us explain the general procedure to
compute this invariant. Let $(G, g)$ be a simply connected,
non-unimodular $3$-dimensional Lie group endowed with a left invariant
metric $g$. We may assume that $(G, g)$ is not a symmetric space,
otherwise the index of symmetry is equal to $3$. It follows from
Subsection~\ref{sec:index-symm-homog} that
$i_{\mathfrak s}(G, g) \le 1$ and we have that
$i_{\mathfrak s}(G, g) = 1$ if and only if there exists a Killing
vector field $X$ on $G$ such that $X_e \neq 0$ and $(\nabla X)_e = 0$,
where $e$ is the identity element of $G$. Now, if $\dim \I(G, g) = 3$
such $X$ will be a right invariant field. In the case where
$\dim \I(G, g) = 4$, the Lie algebra of the isotropy group is
generated by a single element, say
$A \in \so(T_eG) \simeq \so(\mathfrak g, g)$ and so we need to find a
right invariant vector field, $X \neq 0$ such that
$(\nabla X)_e = \alpha \, A$ for some $\alpha \in \mathbb R$. Recall
that the second situation only occurs when $G$ is isomorphic to $G_0$
and $(G, g)$ is isometric to $(G_0, g_{\mu, \nu})$ (see Theorem
\ref{sec:case-g_0}) and in this case we can take
\begin{equation}\label{eq:11}
  A =
  \begin{pmatrix}
    0 & 0 & -\nu \\
    0 & 0 & -\frac{2 \nu}{\mu} \\
    1 & 2 & 0
  \end{pmatrix}
\end{equation}
with respect to the basis $e_0, e_1, e_2$. After some standard
calculations we obtain the index of symmetry for every metric
described in Table \ref{tab:left-invariant-metrics}. We summarize the
results in Table~\ref{tab:index-of-symmetry}. It is worth mention that
the fourth column of Table \ref{tab:index-of-symmetry} collects the
vector fields generating the distribution of symmetry $\mathfrak s_g$,
and since $\mathfrak s_g$ is invariant by isometries, such fields can
be taken left invariant. For example, when $c = 0$ and
$g = g_{\mu, \nu}$, the distribution of symmetry is generated by
$e_0 - \frac12 e_1$ because there is a Killing vector field $X$ such
that $X_e = (e_0 - \frac 12 e_1)|_e$ and $(\nabla X)_e = 0$. Notice
that $X$ is not right invariant. In fact, in order to construct such
an $X$ one shows thar $(\nabla X)_e$ is a multiple of the
skew-symmetric endomorphism given by~(\ref{eq:11}).

\begin{table}[ht]
  \caption{The index and distribution of symmetry
    $i_{\mathfrak s}(G, g)$ and $\mathfrak s_g$ of the Lie group $G$
    with respect to the left invariant metric $g$.}
  \centering
  {\tabulinesep=1.2mm
    \begin{tabu}{|c|c|c|c|c|c|c|}
      \hline
      \multicolumn{2}{|c|}{$G$} & \multicolumn{3}{c|}{$g$} & $i_{\mathfrak s}(G, g)$ & $\mathfrak s_g$ \\ \hline \hline
      \multicolumn{2}{|c|}{$G_I$} & \multicolumn{3}{c|}{$g_\nu$} & $3$ & $TG_I$ \\ \hline
      \multirow{13}{*}{$G_c$} & \multirow{2}{*}{$c<0$} & \multirow{2}{*}{$g_{\mu,\nu}$} & \multicolumn{2}{c|}{$\mu < |c|$} & 0 & - \\ \cline{4-7}
      & & & \multicolumn{2}{c|}{$\mu=|c|$} & $1$ & $\mathbb R e_2$\\ \cline{2-7}
      & \multirow{2}{*}{$c=0$} & \multicolumn{3}{c|}{$g_{\mu,\nu}$} & $1$ & $\mathbb R(e_0-\frac12e_1)$\\ \cline{3-7}
      & & \multicolumn{3}{c|}{$g_\nu$} & 3 & $TG_0$ \\ \cline{2-7}
      & \multirow{3}{*}[-.5pc]{$0<c<1$}& \multirow{3}{*}{$g_{\mu,\nu}$} & \multicolumn{2}{c|}{$\mu = 0$} & 1 & $\mathbb R e_2$ \\ \cline{4-7}
      & & & \multicolumn{2}{c|}{$0<\mu\neq\sqrt c$} & 0 & - \\ \cline{4-7}
      & & & \multicolumn{2}{c|}{$\mu=\sqrt c$} & 1 & $\mathbb R(e_0+\frac1{\sqrt c}e_1)$ \\ \cline{2-7}
      & \multirow{3}{*}{$c=1$}& \multirow{2}{*}{$g_{\mu,\nu}$} & \multicolumn{2}{c|}{$\mu<1$} & 0 & - \\ \cline{4-7}
      & & & \multicolumn{2}{c|}{$\mu=1$} & 1 & $\mathbb Re_0$ \\ \cline{3-7}
      & & \multicolumn{3}{c|}{$g'_{\lambda,\nu}$} & 0 & - \\ \cline{2-7}
      & \multirow{3}{*}{$1<c$} & \multirow{3}{*}{$g_{\mu,\nu}$} & \multirow{2}{*}{$\mu<c$} & $\mu\neq(\sqrt c -1)^2+1$ & 0 & - \\ \cline{5-7}
      & & & & $\mu=(\sqrt c-1)^2+1$ & $1$ & $\mathbb R\left(\frac{c - 3 \sqrt c + 2}{\sqrt c - 1}e_0+e_1\right)$ \\ \cline{4-7}
      & & & \multicolumn{2}{c|}{$\mu=c$} & 3 & $TG_c$ \\ \hline
    \end{tabu}
  }
  \label{tab:index-of-symmetry}
\end{table}

\subsection{The geometric meaning of the subset of metrics with
  maximal index of symmetry inside the moduli space of left invariant
  metrics}

Let $G$ be a $3$-dimensional non-unimodular Lie group and denote by
$\mathcal M(G)$ the moduli space of left invariant metrics up to
isometric automorphism. Using the classification of Ha-Lee given in
Table \ref{tab:left-invariant-metrics}, we can identify
$\mathcal M(G)$ with a topological subspace of the symmetric space
$\Sym_3^+ = \GL_3(\mathbb R) / \OO(3)$ of positive definite inner
products on $\mathbb R^3$, and thus $\mathcal M(G)$ inherits this
natural topology. More precisely, we have the following homeomorphisms
\begin{equation*}
  \mathcal M(G) \simeq
  \begin{cases}
    \mathbb R^+, & G = G_I, \\
    (0, |c|] \times \mathbb R^+, & G = G_c \text{ with } c < 0, \\
    (\mathbb R^+ \times \mathbb R^+) \sqcup \mathbb R^+, & G = G_0,\\
    [0, 1) \times \mathbb R^+, & G = G_c \text{ with } 0 < c < 1, \\
    \mathcal X, & G = G_1, \\
    (1, c] \times \mathbb R^+, & G = G_c \text{ with } 1 < c,
  \end{cases}
\end{equation*}
where $\mathcal X$ can be thought as the gluing of the perpendicular
half-planes
\begin{equation*}
  \{(\mu, 0, \nu): \mu \in (0, 1], \, \nu \in \mathbb R^+\} \cup_{\mathcal L} \{(1, \lambda, \nu): \lambda \in [0, 1), \, \nu \in \mathbb R^+\}
\end{equation*}
along the line $\mathcal L = \{(1, 0, \nu): \nu \in \mathbb
R^+\}$. Notice that $g'_{\lambda, \nu}$ in Table
\ref{tab:left-invariant-metrics} makes sense for $\lambda = 0$ if one
defines $g'_{0, \nu} = g_{1, \nu}$. Recall that $\mathcal X$ is
homeomorphic to an open set of $\mathbb R^2$, but the natural
inclusion of $\mathcal X \subset \mathbb{R}^3$ into $\mathbb R^3$ is
not differentiable along the gluing line.

Now let $\mathcal S(G) \subset \mathcal M(G)$ be the subset of
(equivalence classes of) metrics with maximal index of symmetry and
let us denote by $\mathcal Z(G)$ the set of singularities of
$\mathcal M(G)$.  Recall that $\mathcal Z(G_I) = \varnothing$;
$\mathcal Z(G_c) \simeq \{|c|\} \times \mathbb R^+$ if $c < 0$;
$\mathcal Z(G_0) = \mathbb R^+$;
$\mathcal Z(G_c) = \{0\} \times \mathbb R^+ \cup \{\sqrt c\} \times
\mathbb R^+$ if $0 < c < 1$; $\mathcal Z(G_1) = \mathcal L$; and
$\mathcal Z(G_c) = \{c\} \times \mathbb R^+$ if $1 < c$.

We can prove the following result by direct inspection using Table
\ref{tab:index-of-symmetry}.

\begin{theorem}\label{sec:geom-mean-subs}
  Let $G$ be a non-unimodular $3$-dimensional Lie group. Then
  \begin{equation*}
    \mathcal Z(G) \subset \mathcal S(G).
  \end{equation*}
  Moreover, equality holds for every $G$ such that $G \not\simeq G_I$ and
  $G \not\simeq G_c$ with $0 < c < 1$.
\end{theorem}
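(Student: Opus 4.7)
The plan is to prove the theorem by a direct case-by-case inspection of the classification of left invariant metrics in Table \ref{tab:left-invariant-metrics}, comparing the explicitly described singular locus $\mathcal Z(G)$ against the locus of maximal symmetry $\mathcal S(G)$ read off from Table \ref{tab:index-of-symmetry}. Since all of the real geometric content has already been extracted—the moduli space description produces $\mathcal Z(G)$ on the one hand, and the full isometry group computations of Section \ref{sec:full-isometry-groups} together with the procedure described at the start of Section 4 produce $i_{\mathfrak s}$ on the other—the proof is essentially bookkeeping.

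Concretely, for each of the six families of Lie groups I would first determine, from Table \ref{tab:index-of-symmetry}, the maximal value $m(G) = \max \{i_{\mathfrak s}(G, g) : g \in \mathcal M(G)\}$ and then describe $\mathcal S(G)$ as the locus where this maximum is attained. For $G = G_I$ every left invariant metric is symmetric, so $m(G_I) = 3$ and $\mathcal S(G_I) = \mathcal M(G_I)$, while $\mathcal Z(G_I) = \varnothing$ gives the (strict) inclusion trivially. For $G = G_c$ with $c < 0$ the maximum $m = 1$ is attained exactly on the boundary $\mu = |c|$, matching $\mathcal Z(G_c) = \{|c|\} \times \mathbb R^+$. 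For $G_0$, the component of $\mathcal M(G_0)$ consisting of the metrics $g_\nu$ yields $i_{\mathfrak s} = 3$, and this coincides with the singular component $\mathbb R^+$ where two strata of the moduli space glue. For $G_1$, the maximum $m = 1$ is attained exactly on $g_{1, \nu} = g'_{0, \nu}$, which is precisely the gluing line $\mathcal L$. For $G_c$ with $c > 1$, the Einstein case $\mu = c$ produces $i_{\mathfrak s} = 3$ by Theorem \ref{sec:case-g_c-1} and is exactly the singular stratum $\{c\} \times \mathbb R^+$. In each of these cases equality $\mathcal Z(G) = \mathcal S(G)$ holds by inspection.

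The only remaining case is $G = G_c$ with $0 < c < 1$, which is where I expect to have to be most careful to reconcile the two descriptions. Here Table \ref{tab:index-of-symmetry} gives that the metrics with $i_{\mathfrak s} \ne 0$ are exactly those with $\mu = 0$ or $\mu = \sqrt c$, while $\mathcal Z(G_c)$ is the union of the boundary $\{0\} \times \mathbb R^+$ and the singular stratum $\{\sqrt c\} \times \mathbb R^+$ arising from the extra degeneration of $P$-twisted inner products. The inclusion $\mathcal Z(G_c) \subset \mathcal S(G_c)$ is then immediate, and one should point out that equality may fail (and, combined with the $G_I$ case, does fail in general), which explains the exceptional clause in the statement.

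The main obstacle, then, is not mathematical depth but rather uniform presentation: organizing the verification so that the topological description of $\mathcal M(G)$ (and the identification of $\mathcal Z(G)$ inside it) lines up cleanly with the entries of Table \ref{tab:index-of-symmetry}. In particular, one should double-check the identifications $g'_{0, \nu} = g_{1, \nu}$ in the $G_1$ case and the normalization $\mu = \sqrt c$ in the $0 < c < 1$ case, since these are the places where the gluing of charts in the moduli space is most delicate. Once the tables are correctly aligned, the inclusion and the specified equalities follow by direct reading, completing the proof.
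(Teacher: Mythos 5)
Your proposal is correct and follows essentially the same route as the paper, which proves this theorem purely by direct inspection of Table \ref{tab:index-of-symmetry} against the description of $\mathcal Z(G)$ — precisely the case-by-case bookkeeping you carry out, with the maximum $m(G)$ of the index over $\mathcal M(G)$ correctly identified in each family. The only point worth flagging is the $0<c<1$ case, where the paper's displayed $\mathcal Z(G_c)$ (the union of the two lines $\{0\}\times\mathbb R^+$ and $\{\sqrt c\}\times\mathbb R^+$) is inconsistent with the later remark that $\mathcal S(G_c) - \mathcal Z(G_c)$ consists of the metrics $g_{\sqrt c,\nu}$; since the theorem asserts only the inclusion in that case, your argument suffices under either reading.
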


\begin{remark}
  Let $0 < c < 1$ and let $g$ be a left invariant metric on $G_c$ such
  that $(G_c, g)$ is isometric to $(G_c, g_{\mu, \nu})$. It is easy to
  see that
  \begin{equation*}
\scal g = - \frac{2 (4 - c - 3 \mu^2)}{(1 - \mu^2) \nu}.
\end{equation*}
In particular, for all the metrics of the form $g = g_{\sqrt c, \nu}$,
which are precisely the ones in $\mathcal S(G_c) - \mathcal Z(G_c)$,
the scalar curvature $\scal g = -8 / \nu$ does not depend on $c$.
\end{remark}

\bibliography{/home/silvio/Dropbox/math/bibtex/mybib.bib}
\bibliographystyle{amsalpha}

\end{document}